\newtheorem{theorem}{Theorem}
\newtheorem{example}[theorem]{Example}
\newtheorem{lemma}[theorem]{Lemma}
\newtheorem{conjecture}[theorem]{Conjecture}
\newtheorem{definition}[theorem]{Definition}
\newtheorem{remark}[theorem]{Remark}
\numberwithin{equation}{section}
\title{On cyclotomic nearly-doubly-regular tournaments}
\author{Shohei Satake
\thanks{
Research and Education Institute for Semiconductors and Informatics, Kumamoto University\\
2-39-1, Kurokami, Chuo, Kumamoto, 860-8555, Japan.\\
E-mail: shohei-satake@kumamoto-u.ac.jp
}
}
\date{}
\begin{document}

\maketitle
\begin{abstract}
Nearly-doubly-regular tournaments have played significant roles in extremal graph theory. 
In this note, we construct new cyclotomic nearly-doubly-regular tournaments and determine their spectrum by establishing a new connection between cyclotomic nearly-doubly-regular tournaments and almost difference sets from combinatorial design theory.
Furthermore, under the celebrated Hardy-Littlewood conjecture F in analytic number theory, our results confirm the conjecture due to Sergey Savchenko (J. Graph Theory {\bf 83} (2016), 44--77) on the existence of infinitely many nearly-doubly-regular tournaments with the canonical spectrum.
\end{abstract}

\section{Introduction} 
A {\it tournament} is an orientation of a complete graph.
A tournament with $n$ vertices is said to be {\it regular} if $n$ is odd and out-degree of each vertex is $(n-1)/2$.
For even $n$, a tournament with $n$ vertices is said to be {\it near-regular} if out-degree of each vertex is equal to $n/2$ or $n/2-1$.
A regular tournament with $n$ vertices is a {\it doubly-regular tournament}, denoted by $DR_n$, if $n\equiv 3 \pmod{4}$ and in- and out-neighbors of each vertex induce regular tournaments.
Similarly, a regular tournament with $n$ vertices is a {\it nearly-doubly-regular tournament}, denoted by $NDR_n$, if $n\equiv 1 \pmod{4}$ and in- and out-neighbors of each vertex induce near-regular tournaments.
Doubly-regular and near-doubly-regular tournaments play key roles in many branches of extremal graph theory such as the counting problem for cycles of a given length in tournaments (see e.g. \cite{AT82}, \cite{BH65}, \cite{C64}, \cite{GKLV23}, \cite{S16}, \cite{S17}, \cite{S24a}, \cite{S24b}). 
Indeed it is known that those minimizes the number of $4$-cycles and maximizes the number of $5$-cycles among regular tournaments (\cite{AT82}, \cite{S16}).
Furthermore, combining Theorem~5 in \cite{AT82} and Theorem~1 in \cite{CG91}, it is readily seen that these tournaments posses quasi-randomness (see also \cite{KS13}, \cite{S2019}, \cite{S2021}).
Hence constructing doubly-regular or nearly-doubly-regular tournaments is an interesting but challenging open problem.
In particular, there is only few known constructions for nearly-doubly-regular tournaments while more constructions are known for doubly-regular tournaments (see e.g. \cite{S2019}, \cite{LW01}).

In this note, we focus on cyclotomic tournaments introduced by Asti\'{e}-Vidal~\cite{A75}. 
For a prime power $q$, let $\mathbb{F}_q$ denote the finite field of order $q$ and let $\mathbb{F}_q^+$ and $\mathbb{F}_q^*=\mathbb{F}_q\setminus \{0\}$ be the additive and multiplicative groups of $\mathbb{F}_q$, respectively.
Let $g$ be a primitive element of $\mathbb{F}_q$, that is, a generator of $\mathbb{F}_q^*$ and $C_0^{(2^{\ell})}$ denote the subgroup of $\mathbb{F}_q^*$ of index $2^{\ell}$. 
If $q-1=2^{\ell}m$, where $\ell\geq 1$ and $m$ is odd, then {\it the cyclotomic tournament $CT_{q}$ with $q$ vertices} is defined as the tournament with vertex set $\mathbb{F}_q$ in which there exists an arc from $x$ to $y$ if and only if $x-y \in C_0^{(2^{\ell})} \cup g C_0^{(2^{\ell})}\cup \cdots \cup g^{2^{\ell-1}-1}C_0^{(2^{\ell})}$. 
This is well-defined since $(D, -D)$ is a partition of $\mathbb{F}_q^*$ where $D=C_0^{(2^{\ell})} \cup gC_0^{(2^{\ell})} \cup \cdots \cup g^{2^{\ell-1}-1}C_0^{(2^{\ell})}$ and $-D=\{-d \mid d \in D\}$.
Note that if $\ell=1$, or equivalently, $q \equiv 3 \pmod{4}$, then $CT_q$ is exactly the {\it Paley tournament} (a.k.a. {\it quadratic residue tournament}) with $q$ vertices.
On the other hand, for the case that $\ell=2$, it has been conjectured (\cite{AD92}, \cite{S16}) that there exist infinitely many primes $p \equiv 5 \pmod{8}$ such that $CT_p$ is nearly-doubly-regular while the authors of \cite{AD92} and \cite{T80} verified that $CT_p$ is nearly-doubly-regular when $p=5, 13, 29, 53, 173, 229, 293, 733$ by computer search. 

In this note, we first prove the following theorem by establishing a new connection between cyclotomic nearly-doubly-regular tournaments and almost difference sets (see Definition~\ref{def-ADS}) from combinatorial design theory.

\begin{theorem}
\label{theorem-main1}
Let $q\equiv 5 \pmod{8}$ be a prime power (equivalently, $\ell=2$).
Then $CT_q$ is nearly-doubly-regular if and only if $q=s^2+4$ for some odd integer $s$.
\end{theorem}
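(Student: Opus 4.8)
The plan is to recast near-double-regularity as a statement about the \emph{difference function} of the connection set and then to evaluate that function by cyclotomy. Write $C_i:=g^iC_0^{(4)}$ for $i\in\{0,1,2,3\}$, so that $D=C_0\cup C_1$; since $q\equiv 5\pmod 8$ forces $-1\in C_2$, the complementary set is $-D=C_2\cup C_3$. For $w\in\mathbb{F}_q^*$ put $d_D(w):=|\{(a,b)\in D\times D: a-b=w\}|=|D\cap(D+w)|$. Because $x\mapsto x+c$ is an automorphism, $CT_q$ is vertex-transitive, so it suffices to study the subtournaments induced on the in- and out-neighbourhoods $N^-(0)=D$ and $N^+(0)=-D$ of $0$. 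A direct count gives, for any arc $v\to u$, that the out-degree of $u$ inside $N^+(v)$ equals $|N^+(u)\cap N^+(v)|=d_D(u-v)$ with $u-v\in -D$; moreover $x\mapsto g^2x$ is an isomorphism from $CT_q$ onto its converse, so the in-neighbourhood condition is equivalent to the out-neighbourhood one.

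Since $d_D$ is constant on each class $C_i$ and satisfies $d_D(-w)=d_D(w)$, it takes only the two values $d_D(C_0)=d_D(C_2)$ and $d_D(C_1)=d_D(C_3)$. Summing $d_D$ over $\mathbb{F}_q^*$ and using $\sum_w d_D(w)=|D|^2$ with $d_D(0)=|D|$, I would obtain $d_D(C_0)+d_D(C_1)=(q-3)/2=\frac{q-5}{4}+\frac{q-1}{4}$. As $\{\frac{q-5}{4},\frac{q-1}{4}\}$ are exactly the two admissible out-degrees of a near-regular tournament on $(q-1)/2$ vertices, this yields the key equivalence: $CT_q$ is nearly-doubly-regular if and only if $\{d_D(C_0),d_D(C_1)\}=\{\frac{q-5}{4},\frac{q-1}{4}\}$, equivalently if and only if $D$ is a $(q,\frac{q-1}{2},\frac{q-5}{4})$-almost difference set whose difference function attains each of its two values on exactly $(q-1)/2$ nonzero elements. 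This is the bridge to Definition~\ref{def-ADS}, and it reduces the whole theorem to showing that the two values of $d_D$ differ by exactly $1$.

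To compute that difference I would use the cyclotomic numbers $(i,j)_4:=|\{z\in C_i: z+1\in C_j\}|$. Normalising $w\in C_s$ by $w^{-1}$ expresses $d_D(C_s)$ as a sum of four quartic cyclotomic numbers, giving $d_D(C_0)=(0,0)_4+(0,1)_4+(1,0)_4+(1,1)_4$ and $d_D(C_1)=(0,0)_4+(0,3)_4+(3,0)_4+(3,3)_4$. Since $(1,0)_4=(1,1)_4=(3,0)_4=(3,3)_4$ (all equal, by the relations $(i,j)_4=(4-i,j-i)_4$ and $(i,j)_4=(j+2,i+2)_4$ valid because $(q-1)/4$ is odd), the common terms cancel and $d_D(C_1)-d_D(C_0)=(0,3)_4-(0,1)_4$. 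Invoking the classical evaluation of the quartic cyclotomic numbers for $q\equiv 5\pmod 8$ in terms of the representation $q=s^2+4t^2$ with $s\equiv 1\pmod 4$, one finds $(0,3)_4-(0,1)_4=\pm t$, so the two values of $d_D$ differ by $|t|$. As $q\equiv 5\pmod 8$ forces $t$ odd, they are consecutive integers precisely when $|t|=1$, i.e. when $q=s^2+4$; the essential uniqueness of the representation $q=s^2+4t^2$ then shows that $q=s^2+4$ with $s$ odd is equivalent to $|t|=1$, which would complete the argument.

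The step I expect to be the main obstacle is this last cyclotomic computation: one must carry the normalisations ($s\equiv 1\pmod 4$ and the sign convention for $t$) correctly through the classical formulas so as to pin the difference $(0,3)_4-(0,1)_4$ down to exactly $\pm t$ --- not merely to a bounded linear expression in $t$ --- which is what makes ``differ by $1$'' equivalent to the clean condition $|t|=1$. By contrast, the reduction in the first two paragraphs is routine once vertex-transitivity and self-conversity are in hand, and the closing equivalence $q=s^2+4\iff|t|=1$ is a standard consequence of the uniqueness of the two-squares representation.
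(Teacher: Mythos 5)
Your argument is correct, and its combinatorial half coincides with the paper's: the reduction of near-double-regularity to the statement that the difference function $d_D(w)=|\{(a,b)\in D\times D : a-b=w\}|$ takes only the two values $\tfrac{q-5}{4}$ and $\tfrac{q-1}{4}$ is exactly the content of the paper's Lemmas~\ref{lem-ND1} and~\ref{lem-ND2}, i.e.\ the statement that $CT_q$ is an $NDR_q$ if and only if $D=C_0^{(4)}\cup C_1^{(4)}$ is a $(q,\tfrac{q-1}{2},\tfrac{q-5}{4})$-almost difference set; your use of vertex-transitivity and of $x\mapsto g^2x$ as an isomorphism onto the converse replaces the paper's direct count over all pairs $u,v$, but this is cosmetic. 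The genuine divergence is in the arithmetic half: the paper at this point simply invokes Theorem~\ref{thm:DHL}, quoted from Ding--Helleseth--Lam and Nowak, whereas you re-derive that characterization from classical quartic cyclotomy. Your computation checks out: $d_D(C_0)=(0,0)_4+(0,1)_4+(1,0)_4+(1,1)_4$ and $d_D(C_1)=(0,0)_4+(0,3)_4+(3,0)_4+(3,3)_4$; the relations $(i,j)_4=(4-i,j-i)_4$ and (since $(q-1)/4$ is odd) $(i,j)_4=(j+2,i+2)_4$ do force $(1,0)_4=(1,1)_4=(3,0)_4=(3,3)_4$; and Gauss's evaluation for $q\equiv 5\pmod 8$ gives $(0,3)_4-(0,1)_4=\pm y$ with $q=x^2+4y^2$, $x\equiv 1\pmod 4$, so the two values of $d_D$ differ by exactly $|y|$ and near-double-regularity becomes $|y|=1$. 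What your route buys is self-containedness: it exposes precisely where $q=s^2+4$ comes from (the $\pm 8y$ terms in the quartic cyclotomic numbers), while the paper's route is shorter because it outsources this to the design-theory literature, where it is proved by essentially the same calculation. One point you should repair: the closing appeal to uniqueness of the representation $q=s^2+4t^2$ via the two-squares theorem is literally valid only for $q$ prime; for the single non-prime case covered by the theorem one has $125=11^2+4\cdot 1^2=5^2+4\cdot 5^2$, so uniqueness fails, and you must instead use that Gauss's formulas involve the \emph{proper} representation (the one with $\gcd(xy,q)=1$, which is unique up to signs), of which $q=s^2+4$ with $s$ odd is an instance. With that adjustment your proof covers all prime powers, in line with Remark~\ref{rem:primepower}.
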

Indeed if $q$ is a prime power so that $q=s^2+4$ for some odd integer $s$, then $q$ must be either a prime or $q=5^3$ (see Remark~\ref{rem:primepower}).

Moreover we determine the spectrum of the nearly-doubly-regular tournament $CT_q$.
It is worth noting that by the following theorem we obtain nearly-doubly-regular tournaments with the {\it canonical spectrum} that asymptotically maximize the number of $6$-cycles among regular tournaments (see Proposition~3 in \cite{S16}).  

\begin{theorem}
\label{theorem-main2}
Let $q\equiv 5 \pmod{8}$ be a prime power such that $q=s^2+4$ for some odd integer $s$.
Then all eigenvalues of the adjacency matrix of $CT_q$ are
\begin{equation}
\frac{q-1}{2},
    \frac{-1 \pm i\sqrt{q-2\sqrt{q}}}{2},
    \frac{-1 \pm i\sqrt{q+2\sqrt{q}}}{2},
\end{equation}
where the first eigenvalue has multiplicity $1$ and each of the other four eigenvalues has multiplicity $\frac{q-1}{4}$.
\end{theorem}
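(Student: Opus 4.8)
The plan is to diagonalise $A$, the adjacency matrix of $CT_q$, by exploiting the fact that $CT_q$ is a Cayley digraph on the additive group $\mathbb{F}_q^{+}$ with connection set $D = C_0^{(4)}\cup gC_0^{(4)} = C_0^{(4)}\cup C_1^{(4)}$. Fixing a nontrivial additive character $\psi$ of $\mathbb{F}_q$, the vectors $v_a = (\psi(ax))_{x\in\mathbb{F}_q}$ for $a\in\mathbb{F}_q$ form an orthogonal eigenbasis, with $Av_a = \lambda_a v_a$ where $\lambda_a = \sum_{d\in D}\psi(-ad)$. For $a=0$ this gives the Perron eigenvalue $\lambda_0 = |D| = (q-1)/2$ of multiplicity $1$. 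For $a\neq 0$, writing $-a \in C_k^{(4)}$ and recalling the quartic Gauss periods $\eta_i = \sum_{d\in C_i^{(4)}}\psi(d)$, multiplication by an element of $C_k^{(4)}$ permutes the cyclotomic classes cyclically, so $\lambda_a = \eta_k + \eta_{k+1}$ (indices mod $4$). As $-a$ runs over $\mathbb{F}_q^{*}$, each class $C_k^{(4)}$ is hit $(q-1)/4$ times; hence the four numbers $\eta_0+\eta_1,\eta_1+\eta_2,\eta_2+\eta_3,\eta_3+\eta_0$ are the nontrivial eigenvalues, each of multiplicity $(q-1)/4$.

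Next I would pass from Gauss periods to Gauss sums. Let $\chi$ be a multiplicative character of order $4$ with $\zeta:=\chi(g)=i$, and set $G_j := \sum_{x\in\mathbb{F}_q^{*}}\chi^{j}(x)\psi(x)$. Orthogonality of characters gives $\eta_i = \tfrac14\bigl(-1 + \sum_{j=1}^{3}\zeta^{-ij}G_j\bigr)$, whence $\eta_k+\eta_{k+1} = \tfrac14\bigl(-2 + \sum_{j=1}^{3}\zeta^{-kj}(1+\zeta^{-j})G_j\bigr)$. The decisive simplification, special to $\ell=2$, is that the $j=2$ term carries the factor $1+\zeta^{-2}=1+(-1)=0$, so the quadratic Gauss sum $G_2$ drops out entirely. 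Using $q\equiv 5\pmod 8$ one checks $\chi(-1)=i^{(q-1)/2}=-1$, so $G_3 = \chi(-1)\overline{G_1} = -\overline{G_1}$; substituting and writing $G_1 = x+iy$ with $x,y\in\mathbb{R}$ collapses the surviving terms to $\eta_k+\eta_{k+1} = \tfrac{-1 + i\,\mathrm{Im}\!\left(i^{-k}(1-i)G_1\right)}{2}$. Evaluating $k=0,1,2,3$ yields imaginary parts $y-x,\,-(x+y),\,x-y,\,x+y$, so the four eigenvalues are exactly $\tfrac{-1\pm i(x-y)}{2}$ and $\tfrac{-1\pm i(x+y)}{2}$.

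It then remains to identify $(x\pm y)^2$. Since $|G_1|^2 = q$ we have $x^2+y^2=q$ and $(x+y)^2+(x-y)^2 = 2q$, matching $(q-2\sqrt q)+(q+2\sqrt q)$; the content is the cross term $(x+y)^2-(x-y)^2 = 4xy = 2\,\mathrm{Im}(G_1^{2})$. Here I would invoke the classical identity $G_1^{2} = J(\chi,\chi)\,G_2$ together with the value of the quadratic Gauss sum $G_2 = \sqrt q$ (for a prime; the prime-power value, fixed by Davenport--Hasse, is still real), so $\mathrm{Im}(G_1^{2}) = \mathrm{Im}\,J(\chi,\chi)\cdot G_2$. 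The quartic Jacobi sum satisfies $J(\chi,\chi) = a+bi$ with $a^2+b^2 = q$, $a$ odd and $b$ even. Now the hypothesis $q = s^2+4 = s^2+2^2$ supplies a representation of $q$ as a sum of the square of an odd integer and the square of an even integer; by the essential uniqueness of the representation of $q$ as a sum of two squares this forces $|b| = 2$, hence $\mathrm{Im}(G_1^{2}) = \pm 2\sqrt q$ and $2xy = \pm 2\sqrt q$. Therefore $(x+y)^2 = q\pm 2\sqrt q$ and $(x-y)^2 = q\mp 2\sqrt q$, giving precisely the eigenvalues $\tfrac{-1\pm i\sqrt{q-2\sqrt q}}{2}$ and $\tfrac{-1\pm i\sqrt{q+2\sqrt q}}{2}$. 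These four values are pairwise distinct (as $0<q-2\sqrt q<q+2\sqrt q$ for $q\ge 5$), each of multiplicity $(q-1)/4$, and together with $\lambda_0$ they account for all $q$ eigenvalues.

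The main obstacle I anticipate is pinning down the arithmetic of the quartic Jacobi sum with the correct normalisation --- in particular verifying that its imaginary part is the \emph{even} part of the decomposition $q=a^2+b^2$ and that $q=s^2+4$ indeed forces this even part to equal $\pm 2$ via uniqueness of the two-squares representation. A secondary technical point is the prime-power case, which by Remark~\ref{rem:primepower} is only $q=5^3$: there the values of $G_2$ and $J(\chi,\chi)$ must be controlled through the Davenport--Hasse lifting relation rather than the classical prime formulas, though the vanishing of the $G_2$-term and the final shape of the spectrum are unaffected.
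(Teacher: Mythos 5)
Your proposal is correct, but it takes a genuinely different route from the paper. The paper never touches quartic Gauss sums or Jacobi sums: it feeds the almost-difference-set property of $D=C_0^{(4)}\cup C_1^{(4)}$ (Theorem~\ref{thm:DHL}, via Lemmas~\ref{lem:cha} and \ref{lem:techmain}) into a character-sum identity to get only the \emph{moduli} $|\lambda_a|=\frac{\sqrt{q}\mp 1}{2}$, evaluated through a single quadratic Gauss sum; it then recovers the \emph{real} part ${\rm Re}(\lambda_a)=-\tfrac12$ for free from the tournament structure ($A+A^T=J_q-I_q$ plus normality), and deduces the imaginary parts and multiplicities by a counting/trace argument. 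You instead compute each $\lambda_a$ exactly as a sum $\eta_k+\eta_{k+1}$ of two quartic Gauss periods, observe the pleasant cancellation of the $G_2$-term (the factor $1+\zeta^{-2}=0$), and pin down the spectrum through the identity $G_1^2=J(\chi,\chi)G_2$ together with the arithmetic of the quartic Jacobi sum, where the hypothesis $q=s^2+4$ enters via essential uniqueness of the two-squares representation $q=s^2+2^2$ (odd part, even part), forcing ${\rm Im}\,J(\chi,\chi)=\pm 2$. Your route is classical cyclotomy, fully self-contained: it bypasses the ADS machinery (the paper's Theorem~\ref{theorem-main1} and the cited results of Ding--Helleseth--Lam) entirely, makes the role of $q=s^2+4$ completely transparent, and in fact would also yield the converse (the spectrum is canonical iff the even part of the two-squares decomposition is $\pm 2$). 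The paper's route buys economy given its own Theorem~\ref{theorem-main1} --- only quadratic Gauss sums are needed, and the tournament structure replaces all the Jacobi-sum arithmetic. One caveat you correctly flag but should take seriously: the uniqueness argument genuinely breaks at the prime power $q=125$, since $125=11^2+2^2=5^2+10^2$ has two essentially different representations; there the Davenport--Hasse lifting is not merely a normalisation issue but is needed to show the Jacobi sum is $(\pm 1+2i)^3=\pm(11-2i)$ rather than $\pm 5\pm 10i$ --- this parallels the paper's own deferral of $q=125$ to Remark~\ref{rem-125}.
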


Remarkably, assuming the following widely-believed conjecture (e.g. \cite[Section A1]{G04}), Theorems~\ref{theorem-main1} and \ref{theorem-main2} imply that there exist infinitely many primes $p \equiv 5 \pmod{8}$ such that $CT_p$ is a nearly-doubly-regular tournament with the canonical spectrum, which also confirms the conjecture due to Sergey Savchenko on the existence of infinitely many such tournaments (see p.68 in \cite{S16}).

\begin{conjecture}[The Hardy-Littlewood conjecture F, \cite{HL23}]
\label{conj:HL}
Suppose that $a>0, b, c$ are integers such that $(a, b, c)=1$, either $a+b$ or $c$ is odd, and $b^2-4ac$ is not a square.
Then there exist infinitely many positive integers $m$ such that $am^2+bm+c$ is a prime.
\end{conjecture}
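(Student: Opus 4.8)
The plan is to follow the original circle-method heuristic of Hardy and Littlewood, which yields not merely the qualitative statement but a precise conjectural asymptotic; I must, however, be candid from the outset that this is a celebrated open problem and that the heuristic stops short of a proof. Write $f(m)=am^{2}+bm+c$ and let $D=b^{2}-4ac$. For a prime $p$ let $\omega(p)$ be the number of residues $m\bmod p$ with $p\mid f(m)$. First I would record the arithmetic of $\omega(p)$: since $\gcd(a,b,c)=1$ and $D$ is not a square, $f$ is irreducible over $\mathbb{Q}$, so completing the square and using the quadratic residue symbol gives $\omega(p)=1+\left(\frac{D}{p}\right)$ for every odd prime $p\nmid a$, while the hypothesis that $a+b$ or $c$ be odd is exactly the statement that $f(0)=c$ and $f(1)=a+b+c$ are not both even, i.e. $\omega(2)<2$. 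The purpose of these conditions is precisely to guarantee that no fixed prime divides every value $f(m)$, which is necessary for the values to be infinitely often prime.

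Next I would form the singular series
\[
\mathfrak{S}(f)=\prod_{p}\frac{1-\omega(p)/p}{1-1/p},
\]
and check that it converges to a strictly positive constant. For every odd $p\nmid a$ the general factor simplifies to $1-\frac{1}{p-1}\left(\frac{D}{p}\right)$, which differs from $1-\frac{1}{p}\left(\frac{D}{p}\right)$ by an absolutely convergent correction; ordering the product by increasing $p$ therefore relates $\mathfrak{S}(f)$ to the special value at $s=1$ of the quadratic Dirichlet $L$-function attached to $\chi_{D}=\left(\frac{D}{\,\cdot\,}\right)$, which is finite and nonzero by Dirichlet's theorem. Positivity then follows because every remaining local factor (in particular the one at $p=2$, where $\omega(2)\le 1$, and those at odd $p\mid a$, where $\omega(p)\le 2<p$) is positive under the stated hypotheses. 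The Hardy--Littlewood heuristic predicts
\[
\#\{\,m\le X:\ f(m)\ \text{is prime}\,\}\ \sim\ \frac{\mathfrak{S}(f)}{2}\int_{2}^{X}\frac{dt}{\log t},
\]
the Bateman--Horn asymptotic for a single irreducible quadratic; since the right-hand side diverges with $X$, this would give the conjecture.

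The hard part is to replace this heuristic by a theorem, and here one meets an obstacle that is insurmountable with present technology. Detecting primes among the values of an irreducible quadratic runs directly into the parity obstruction of sieve theory, identified by Selberg: no choice of sieve weights can distinguish integers with an even number of prime factors from those with an odd number, which is exactly what one would need in order to sieve $f(m)$ down to genuine primes rather than to almost-primes. The strongest unconditional result in this direction, due to Iwaniec, shows that an irreducible quadratic satisfying the above conditions (for instance $m^{2}+1$) takes infinitely many values that are products of at most two primes; upgrading ``at most two'' to ``exactly one'' remains open, and indeed no single polynomial of degree $\ge 2$ is known to represent infinitely many primes. For this reason I would not expect to complete the argument by elementary or sieve-theoretic means, and it is precisely for this reason that Conjecture~\ref{conj:HL} is invoked in the present paper as a hypothesis rather than established. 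A genuine proof would require an essentially new input---a bilinear-form (Type~II) estimate for the values of $f$ strong enough to defeat the parity barrier---of a kind so far available only for linear forms or for sparse but algebraically special sequences.
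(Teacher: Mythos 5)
The statement you were asked about is not proved in the paper at all: it is stated as Conjecture~3 (the Hardy--Littlewood Conjecture F of 1923), and the paper uses it purely as a hypothesis --- Theorems~1 and~2 are unconditional, and only the corollary about infinitely many nearly-doubly-regular cyclotomic tournaments with the canonical spectrum is conditional on it. Your response is therefore the correct one: you rightly refuse to claim a proof, since no polynomial of degree at least $2$ is known to represent infinitely many primes, and you correctly locate the obstruction in the parity barrier of sieve theory, with Iwaniec's result on $m^{2}+1$ taking infinitely many $P_{2}$ values as the current state of the art. Your heuristic account is also accurate in its details: the hypothesis $(a,b,c)=1$ together with ``$a+b$ or $c$ odd'' is exactly what forces $\omega(p)<p$ for every prime (the condition at $p=2$ being that $f(0)=c$ and $f(1)=a+b+c$ are not both even), the non-square discriminant gives irreducibility and $\omega(p)=1+\bigl(\frac{D}{p}\bigr)$ for odd $p\nmid a$, and the convergence and positivity of the singular series via $L(1,\chi_{D})\neq 0$ is the standard justification that the predicted main term is nondegenerate. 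In short, there is no gap to repair and no paper proof to compare against; for this particular statement the only defensible ``proof attempt'' is precisely the one you gave --- a heuristic plus an honest declaration that the statement remains open, which matches how the paper itself deploys it.
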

Notice that the quadratic polynomial 
$s^2+4$
satisfies the condition in Conjecture~\ref{conj:HL}.
Our computer search using Mathematica found $455,927$ primes of the form $s^2+4$ when $1 \leq s \leq 10^7$.
The list of these primes contains seven primes $13$, $29$, $53$, $173$, $229$, $293$, $733$ which are exactly the orders of cyclotomic nearly-doubly-regular tournaments found in \cite{AD92} and \cite{T80}. 

The rest of this note is organized as follows. Sections~\ref{sect-thm1} and \ref{sect-thm2} present the proofs of Theorems~\ref{theorem-main1} and \ref{theorem-main2}, respectively. Finally, some concluding remarks are given in Section~\ref{sect-rem}.





\section{Proof of Theorem~\ref{theorem-main1}}
\label{sect-thm1}
Our proof of Theorem~\ref{theorem-main1} is based on several results in combinatorial design theory.
First we introduce the definition of almost difference sets.

\begin{definition}[Almost difference set]
\label{def-ADS}
Let $n$ be a positive odd integer and let $k$ and $\lambda$ be positive integers. 
Let $G$ be an abelian group of order $n$.
Then a subset $D \subset G \setminus \{0\}$ with $k$ elements is called an {\it $(n, k, \lambda)$-almost difference set} of $G$ if for
some $(n-1)/2$ non-zero elements $a \in G$, the equation
\[
x-y=a
\]
has exactly $\lambda$ solutions $(x, y)\in D \times D$, and for other non-zero elements, the equation has exactly $\lambda+1$ solutions $(x, y)\in D \times D$. 
\end{definition}

To build tournaments from almost difference sets, we introduce {\it Cayley digraphs}.

\begin{definition}[Cayley digraph]
Let $G$ be an abelian group of order $n$ and $D \subset G \setminus \{0\}$ be a subset with $(n-1)/2$ elements such that $(D, -D)$ is a partition of $G \setminus \{0\}$. 
Then the {\it Cayley digraph} $Cay(G,D)$ is a digraph with vertex set $D$ in which for $x, y \in G$ there is an arc $(x, y)$ from $x$ to $y$ if and only if $x-y \in D$.
\end{definition}

Now we prove the following two lemmas for the proof of Theorem~\ref{theorem-main1}.

\begin{lemma}
\label{lem-ND1}
Let $n\equiv 1 \pmod{4}$ be a positive integer. 
Then a regular tournament $T$ with $n$ vertices is an $NDR_n$ if for each pair of two distinct vertices $u$ and $v$, the size of common out-neighbor of $u$ and $v$, denoted by $N_T^-(u,v)$, is $(n-1)/4$ or $(n-5)/4$.
\end{lemma}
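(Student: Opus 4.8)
The plan is to check, for an arbitrary vertex $v$, the two conditions in the definition of $NDR_n$ that are not already built into the hypotheses, namely that $N^+(v)$ and $N^-(v)$ each induce a near-regular sub-tournament (regularity of $T$ and $n \equiv 1 \pmod 4$ are assumed). First I would record the arithmetic. Writing $m = (n-1)/2$ for the common out-degree, the congruence $n \equiv 1 \pmod 4$ makes $m$ even, so a tournament on $m$ vertices is near-regular exactly when every out-degree equals $m/2 = (n-1)/4$ or $m/2 - 1 = (n-5)/4$. These are precisely the two values permitted for $N_T^-(u,v)$, which is what makes the hypothesis the right one.

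For $N^+(v)$ the verification is immediate. Fixing $u \in N^+(v)$, the out-degree of $u$ inside the sub-tournament induced on $N^+(v)$ counts the vertices $w$ with $v \to w$ and $u \to w$; such a $w$ is exactly a common out-neighbor of $u$ and $v$ (and automatically lies in $N^+(v) \setminus \{u\}$). Hence this out-degree equals $N_T^-(u,v)$, which by hypothesis is $(n-1)/4$ or $(n-5)/4$, giving near-regularity of $N^+(v)$ at once.

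The in-neighborhood $N^-(v)$ is the only step needing an extra idea, since the hypothesis concerns out-neighbors rather than in-neighbors. Fixing $u \in N^-(v)$, I would partition the $m$ out-neighbors of $u$ in $T$ into those in $N^+(v)$, those in $N^-(v)$, and the vertex $v$ itself (an out-neighbor of $u$ because $u \to v$). The block lying in $N^+(v)$ again has size $N_T^-(u,v)$, so regularity shows that the out-degree of $u$ inside $N^-(v)$ equals $m - 1 - N_T^-(u,v)$; substituting the two admissible values of $N_T^-(u,v)$ returns $(n-5)/4$ and $(n-1)/4$. Thus every out-degree inside $N^-(v)$ again lies in $\{(n-5)/4,\,(n-1)/4\}$, so $N^-(v)$ is near-regular as well, and the lemma follows. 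The only care required is the bookkeeping over which of $u$ and $v$ are excluded from each count; the remaining manipulations are routine.
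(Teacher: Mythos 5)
Your proof is correct and is exactly the verification the paper has in mind: the paper's own proof is just the one-line assertion that the lemma ``immediately follows from the definition of $NDR_n$,'' and your argument spells out that immediate check (counting the out-degree of $u \in N^+(v)$ inside $N^+(v)$ as $N_T^-(u,v)$, and inside $N^-(v)$ as $(n-1)/2 - 1 - N_T^-(u,v)$). No discrepancy; you have simply filled in the bookkeeping the paper omits.
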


\begin{proof}
It immediately follows from the definition of $NDR_n$.    
\end{proof}

\begin{lemma}
\label{lem-ND2}
Let $n \equiv 1 \pmod{4}$ be a positive integer.
Let $G$ be an abelian group of order $n$ and $D \subset G \setminus \{0\}$ be a subset with $(n-1)/2$ elements such that $(D, -D)$ is a partition of $G \setminus \{0\}$. 
Then the Cayley digraph $Cay(G,D)$ is an $NDR_n$ if and only if $D$ is an $(n, (n-1)/2, (n-5)/4)$-almost difference set of $G$.
\end{lemma}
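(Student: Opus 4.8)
The plan is to translate the combinatorial condition defining an almost difference set into the counting condition on common out-neighbors that governs near-double-regularity, exploiting the group structure of the Cayley digraph. First I would introduce the difference representation function
\[
\delta_D(a) = \bigl|\{(x,y) \in D \times D : x - y = a\}\bigr|, \qquad a \in G,
\]
and record the two elementary facts that drive everything: (i) since $(x,y) \mapsto (y,x)$ is a bijection of $D \times D$, one has $\delta_D(-a) = \delta_D(a)$ for every $a$; and (ii) in $Cay(G,D)$ a vertex $w$ is a common out-neighbor of distinct $u,v$ precisely when $u - w \in D$ and $v - w \in D$, and writing $x = u-w$, $y = v-w$ gives $x-y = u-v$, so that $w \mapsto (x,y)$ identifies the common out-neighbors with the representations of $u-v$, i.e.\ $N_T^-(u,v) = \delta_D(u-v)$. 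Because $(D,-D)$ partitions $G\setminus\{0\}$ and $|D| = (n-1)/2$, the digraph $Cay(G,D)$ is a regular tournament on $n \equiv 1 \pmod 4$ vertices, so Lemma~\ref{lem-ND1} is available.

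For the ``if'' direction I would argue as follows. Suppose $D$ is an $(n,(n-1)/2,(n-5)/4)$-almost difference set. Since $\lambda+1 = (n-5)/4 + 1 = (n-1)/4$, the definition gives $\delta_D(a) \in \{(n-5)/4,\,(n-1)/4\}$ for every nonzero $a$. Hence $N_T^-(u,v) = \delta_D(u-v) \in \{(n-1)/4,\,(n-5)/4\}$ for every pair of distinct vertices $u,v$, and Lemma~\ref{lem-ND1} immediately yields that $Cay(G,D)$ is an $NDR_n$.

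The ``only if'' direction is where the main work lies, since Lemma~\ref{lem-ND1} supplies only a sufficient condition and I must reason directly from the definition of $NDR_n$. Fix any vertex $u$ and let $N^+(u)$ denote its out-neighborhood. For $v \in N^+(u)$, the out-degree of $v$ in the sub-tournament induced on $N^+(u)$ equals $|N^+(u)\cap N^+(v)| = N_T^-(u,v) = \delta_D(u-v)$. As $N^+(u)$ has even cardinality $(n-1)/2$ and, by hypothesis, induces a near-regular tournament, these out-degrees lie in $\{(n-1)/4,\,(n-5)/4\}$; moreover, as $v$ ranges over $N^+(u)$ the difference $u-v$ ranges bijectively over $D$, so $\delta_D(a) \in \{(n-5)/4,\,(n-1)/4\}$ for every $a \in D$. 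The symmetry $\delta_D(a) = \delta_D(-a)$ together with the partition $(D,-D)$ then extends this to every nonzero $a \in G$.

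It remains to verify the exact split demanded by the definition, and this falls out of a counting identity. I would compute $\sum_{a \in G}\delta_D(a) = |D|^2 = ((n-1)/2)^2$ and $\delta_D(0) = |D| = (n-1)/2$, whence $\sum_{a \neq 0}\delta_D(a) = (n-1)(n-3)/4$. Writing $t$ for the number of nonzero $a$ with $\delta_D(a) = (n-5)/4$ and $n-1-t$ for those with $\delta_D(a) = (n-1)/4$, substituting into this total forces $t = (n-1)/2$; thus exactly $(n-1)/2$ nonzero elements are represented $(n-5)/4$ times, which is precisely the $(n,(n-1)/2,(n-5)/4)$-almost-difference-set condition. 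The only genuinely delicate step is the ``only if'' implication, where one cannot invoke Lemma~\ref{lem-ND1} (not stated as an equivalence) and must instead read the near-regularity of the neighborhoods through $\delta_D$; after that, the global sum conveniently pins down the half-and-half distribution for free, so no separate verification of the count is needed.
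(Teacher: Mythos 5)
Your proposal is correct and follows essentially the same route as the paper's proof: the key step in both is the bijection identifying common out-neighbors of $u,v$ with solutions $(x,y)\in D\times D$ of $x-y=u-v$, combined with Lemma~\ref{lem-ND1}. The paper's own write-up is much terser---it does not spell out the ``only if'' direction (which needs the easy converse of Lemma~\ref{lem-ND1}, obtained by noting that any two vertices of a tournament are adjacent) nor the counting identity $\sum_{a\neq 0}\delta_D(a)=(n-1)(n-3)/4$ forcing exactly $(n-1)/2$ elements to be represented $(n-5)/4$ times---so your explicit treatment of those steps simply fills in details the paper leaves to the reader.
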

\begin{proof}
Let $u$ and $v$ be distinct vertices. 
Since out-neighbor of $u$ is $u+D=\{u+d \mid d \in D\}$, 
it clearly holds that $N_{Cay(G, D)}^-(u, v)=(u+D) \cap (v+D)$.
On the other hand, for each $z \in (u+D) \cap (v+D)$, there uniquely exists $(d_1, d_2) \in D \times D$ such that $z=u+d_1=v+d_2$, which implies that there exists a bijection from $(u+D) \cap (v+D)$ to the set of solutions $(x, y) \in D \times D$ of the equation $x-y=u-v$.
The proposition follows from the above observation and Lemma~\ref{lem-ND1}.
\end{proof}

The following is a key theorem to prove Theorem~\ref{theorem-main1}.

\begin{theorem}[\cite{DHL99}, \cite{N14}]
\label{thm:DHL}
Let $q \equiv 5 \pmod{8}$ be a prime power.
Recall that $C_0^{(4)}$ is the multiplicative subgroup of $\mathbb{F}_q^*$ of index $4$.
For $i=1, 2, 3$, let 
$C_i^{(4)}=g^{i}C_0^{(4)}$.
Then for each $i$, $C^{(4)}_i \cup C^{(4)}_{i+1}$ is an $(q, (q-1)/2, (q-5)/4)$-almost difference set of the additive group $\mathbb{F}_q^+$ of $\mathbb{F}_q$ if and only if $q=s^2+4$ for some odd integer $s$. 
\end{theorem}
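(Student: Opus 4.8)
The plan is to translate the almost-difference-set property into a statement about the \emph{difference function}. For $D=C_i^{(4)}\cup C_{i+1}^{(4)}$ and $w\in\mathbb{F}_q^*$ set
\[
d_D(w)=|\{(x,y)\in D\times D : x-y=w\}|.
\]
By Definition~\ref{def-ADS}, $D$ is a $(q,(q-1)/2,(q-5)/4)$-almost difference set precisely when $d_D$ equals $(q-5)/4$ on $(q-1)/2$ of the nonzero elements and $(q-1)/4$ on the remaining $(q-1)/2$. First I would reduce to a single index: multiplication by $g$ is an automorphism of $\mathbb{F}_q^+$ carrying $C_j^{(4)}$ to $C_{j+1}^{(4)}$, hence $C_i^{(4)}\cup C_{i+1}^{(4)}$ to $C_{i+1}^{(4)}\cup C_{i+2}^{(4)}$, and it merely relabels the values of $d_D$; so all four consecutive pairs have the same difference behavior and it suffices to treat $D=C_0^{(4)}\cup C_1^{(4)}$.

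Since $D$ is invariant under multiplication by $C_0^{(4)}$, one has $d_D(\alpha w)=d_D(w)$ for $\alpha\in C_0^{(4)}$, so $d_D$ is constant on each coset $C_c^{(4)}$; write $d_c$ for its value there. As each coset has size $(q-1)/4$, the almost-difference-set condition becomes the finite requirement that the multiset $\{d_0,d_1,d_2,d_3\}$ equal $\{(q-5)/4,(q-5)/4,(q-1)/4,(q-1)/4\}$. To compute the $d_c$ I would divide the equation $x-y=w$ (with $w\in C_c^{(4)}$) by $w$ to reduce it to $x'-y'=1$; writing $(s,t)=|\{z\in C_s^{(4)}:z+1\in C_t^{(4)}\}|$ for the cyclotomic numbers of order $4$, this yields (indices mod $4$)
\[
d_c=(-c,-c)+(-c,1-c)+(1-c,-c)+(1-c,1-c).
\]
A global count, namely $\sum_{w\neq 0}d_D(w)=|D|(|D|-1)$ together with the coset-constancy, gives the consistency check $d_0+d_1+d_2+d_3=q-3$, matching $2\cdot\tfrac{q-5}{4}+2\cdot\tfrac{q-1}{4}$.

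The heart of the argument is then the substitution of the classical evaluation of the order-$4$ cyclotomic numbers in the case $q\equiv 5\pmod 8$ (equivalently $f:=(q-1)/4$ odd), where, writing $q=a^2+4b^2$ with $a\equiv 1\pmod 4$ (so $b$ is determined up to sign), each $16(s,t)$ is an explicit integer combination of $q$, $a$ and $b$. After simplification each total takes the form $d_c=\tfrac{q-5}{4}+\varepsilon_c$ with integer corrections summing to $2$, and the differences $d_c-d_{c'}$ reduce to expressions in $b$ alone (the $a$-contributions cancelling). One then checks that the multiset $\{\varepsilon_0,\varepsilon_1,\varepsilon_2,\varepsilon_3\}$ equals $\{0,0,1,1\}$ if and only if $b^2=1$: for $b=\pm 1$ exactly two corrections vanish and two equal $1$ (flipping the sign of $b$ only permutes the $d_c$, so the sign ambiguity is harmless), whereas for $|b|\geq 2$ two of the $d_c$ differ by more than $1$ and the two-valued pattern is impossible. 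Since $a$ is odd, $b^2=1$ is exactly $q=a^2+4=s^2+4$ with $s=|a|$ odd, which gives both directions of the equivalence.

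\textbf{The main obstacle} is the bookkeeping inside this substitution: one must use the order-$4$ cyclotomic-number table in the $f$-odd branch with the correct normalization $a\equiv 1\pmod 4$, keep track of the sign of $b$, and verify that the $a$-dependence genuinely cancels in the differences $d_c-d_{c'}$ so that the criterion is controlled by $b$ alone. Once the four values $d_c$ are in hand the equivalence with $b^2=1$ is a short finite check, and the opening reduction together with the coset-constancy of $d_D$ handles every index $i$ at once.
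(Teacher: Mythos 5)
The paper does not actually prove Theorem~\ref{thm:DHL} --- it imports it from \cite{DHL99} and \cite{N14} --- and the proofs in those sources run exactly along the lines you propose: reduce the difference function of $D=C_i^{(4)}\cup C_{i+1}^{(4)}$ to its constant values $d_c$ on the cosets $C_c^{(4)}$, express each $d_c$ through cyclotomic numbers of order $4$, and substitute their classical evaluation for $q\equiv 5\pmod{8}$ written as $q=a^2+4b^2$ with $a\equiv 1\pmod{4}$. Your plan is correct (carrying out the substitution gives $d_0=d_2=(q-3-2b)/4$ and $d_1=d_3=(q-3+2b)/4$, the sign of $b$ depending on the choice of $g$, so the required two-valued pattern $\{(q-5)/4,\,(q-1)/4\}$ occurs precisely when $|b|=1$, i.e.\ $q=a^2+4=s^2+4$ with $s$ odd), matching the standard argument of the cited references.
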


Now we are ready to prove Theorem~\ref{theorem-main1}

\begin{proof}[Theorem~\ref{theorem-main1}]
Let $D=C^{(4)}_{0}\cup C^{(4)}_{1}$. Then it is not difficult to check that $(D, -D)$ is a partition of $\mathbb{F}_q^+ \setminus \{0\}$.
Hence it is readily seen that the cyclotomic tournament $CT_q$ with $\ell=2$ is $Cay(\mathbb{F}_q^+, D)$.
Hence the theorem follows from Lemma~\ref{lem-ND2} and Theorem~\ref{thm:DHL}.
\end{proof}

\begin{remark}
$Cay(\mathbb{F}_q^+, C^{(4)}_{0}\cup C^{(4)}_{1})$ and $Cay(\mathbb{F}_q^+, C^{(4)}_{i}\cup C^{(4)}_{i+1})$ are isomorphic for every $i=1,2,3$ (where each index is calculated modulo $4$).
In fact, it clearly holds that $C^{(4)}_{i} \cup C^{(4)}_{i+1}=g^i(C^{(4)}_{0}\cup C^{(4)}_{1})$ and thus the map $x \mapsto g^ix$ is an isomorphism. 
\end{remark}

\begin{example}
\label{eg:1}
Let $q=13$ and take $2 \in \mathbb{F}_{13}^*$ as a primitive element. Then it is not so difficult to check that $C^{(4)}_{0}\cup C^{(4)}_{1}=\{1, 2, 3, 5, 6, 9\}$ is a $(13, 6, 2)$-almost difference set of $\mathbb{F}_{13}^+$.
As show in \cite{T80} and \cite{AD92}, $CT_{13}=Cay(\mathbb{F}_{13}^+, C^{(4)}_{0}\cup C^{(4)}_{1})$ is an $NDR_{13}$.
Next let $q=29$ and take $2 \in \mathbb{F}_{29}^*$ as a primitive element. Then $C^{(4)}_{0}\cup C^{(4)}_{1}=\{1, 2, 3, 7, 11, 14, 16, 17, 19, 20, 21, 23, 24, 25\}$ is a $(29, 14, 6)$-almost difference set of $\mathbb{F}_{29}^+$.
Hence $CT_{29}=Cay(\mathbb{F}_{29}^+, C^{(4)}_{0}\cup C^{(4)}_{1})$ is an $NDR_{29}$ found in \cite{AD92}.
\end{example}

\section{Proof of Theorem~\ref{theorem-main2}}
\label{sect-thm2}

We present the proof of Theorem~\ref{theorem-main2}.
The following lemmas are useful to prove Theorem~\ref{theorem-main2}.

\begin{lemma}[e.g. \cite{N14}]
\label{lem:cha}
Let $G$ be an abelian group of order $n$ and $D$ an $(n, k, \lambda)$-almost difference set of $G$.
Let $S_D$ be the set of elements $a \in G \setminus \{0\}$ such that there exist $\lambda$ solutions $(x, y) \in D \times D$ of the equation $x-y=a$.
Then it holds for each non-trivial character $\chi$ of $G$ that
\begin{equation}
   \biggl|\sum_{d \in D}\chi(d)\biggr|^2=(k-\lambda-1)-\sum_{s\in S_D}\chi(s).
\end{equation}
\end{lemma}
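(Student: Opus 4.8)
The plan is to expand the squared modulus of the character sum directly and then reorganise it according to the difference structure of $D$. First I would write $\left|\sum_{d\in D}\chi(d)\right|^2 = \left(\sum_{x\in D}\chi(x)\right)\overline{\left(\sum_{y\in D}\chi(y)\right)}$ and use the fact that values of a character of a finite abelian group are roots of unity, so $\overline{\chi(y)}=\chi(-y)$. Since $\chi$ is a homomorphism of the additive group, the product then collapses to the double sum $\sum_{x,y\in D}\chi(x-y)$. This reduces the whole statement to counting, for each $a\in G$, the number of pairs $(x,y)\in D\times D$ with $x-y=a$.

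Next I would separate the diagonal from the off-diagonal contributions. The pairs with $x=y$ contribute $|D|=k$ copies of $\chi(0)=1$, giving a term $k$. For $a\neq 0$, the definition of an $(n,k,\lambda)$-almost difference set supplies exactly the multiplicities we need: writing $d(a)$ for the number of solutions of $x-y=a$, we have $d(a)=\lambda$ for the $(n-1)/2$ elements $a\in S_D$ and $d(a)=\lambda+1$ for the remaining $(n-1)/2$ non-zero elements, which form the complement $T:=(G\setminus\{0\})\setminus S_D$. Hence the off-diagonal part equals $\lambda\sum_{a\in S_D}\chi(a)+(\lambda+1)\sum_{a\in T}\chi(a)$.

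The final step, and the only place calling for a little care, is to eliminate the sum over $T$ in favour of the sum over $S_D$ so as to reach the stated form. Here I would invoke orthogonality for a non-trivial character, namely $\sum_{a\in G}\chi(a)=0$, which gives $\sum_{a\in G\setminus\{0\}}\chi(a)=-1$, that is $\sum_{a\in S_D}\chi(a)+\sum_{a\in T}\chi(a)=-1$. Substituting $\sum_{a\in T}\chi(a)=-1-\sum_{a\in S_D}\chi(a)$ into the off-diagonal expression and adding the diagonal term $k$ yields $k+(\lambda+1)(-1)-\sum_{a\in S_D}\chi(a)=(k-\lambda-1)-\sum_{s\in S_D}\chi(s)$, which is the claimed identity. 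The computation is essentially routine; the only things to watch are the bookkeeping of the two multiplicities $\lambda$ and $\lambda+1$ and the decision to absorb the complementary sum over $T$ via orthogonality, rather than leaving the result in the symmetric form involving both $S_D$ and its complement.
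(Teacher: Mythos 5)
Your proof is correct and complete: expanding $\bigl|\sum_{d\in D}\chi(d)\bigr|^2$ into $\sum_{x,y\in D}\chi(x-y)$, splitting off the diagonal contribution $k$, weighting the non-zero differences by $\lambda$ on $S_D$ and $\lambda+1$ on its complement, and eliminating the complementary sum via $\sum_{a\in G}\chi(a)=0$ is exactly the standard argument. The paper itself offers no proof of this lemma (it is quoted from the survey \cite{N14}), and your derivation matches the canonical one found there.
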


\begin{lemma}
\label{lem:techmain}
Let $p \equiv 5 \pmod{8}$ be a prime such that $p=s^2+4$. 
Let $D=C_0^{(4)} \cup C_1^{(4)}$. 
Then it holds that $S_{D}$ is $C_0^{(2)}$ or $C_1^{(2)}$ where $C_0^{(2)}$ is the multiplicative subgroup of index $2$ and $C_1^{(2)}=g C_0^{(2)}$, $g$ is a primitive element of $\mathbb{F}_p$. 
\end{lemma}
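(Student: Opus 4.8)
The plan is to exploit the multiplicative symmetry of the difference function of $D$. For $a \in \mathbb{F}_p \setminus \{0\}$, write $\delta(a)$ for the number of solutions $(x,y) \in D \times D$ of $x - y = a$. By Theorem~\ref{thm:DHL} (using the hypothesis $p = s^2 + 4$), $D$ is a $(p, (p-1)/2, (p-5)/4)$-almost difference set, so $\delta(a) \in \{(p-5)/4, (p-1)/4\}$ and, in the notation of Lemma~\ref{lem:cha}, $S_D = \{a \in \mathbb{F}_p \setminus \{0\} : \delta(a) = (p-5)/4\}$ has exactly $(p-1)/2$ elements. The first step is to observe that $\delta$ is constant on each coset $C_j^{(4)}$: since $D = C_0^{(4)} \cup C_1^{(4)}$ is invariant under multiplication by any $c \in C_0^{(4)}$ (because $c\,C_i^{(4)} = C_i^{(4)}$), the map $(x,y) \mapsto (cx, cy)$ is a bijection between the solution sets of $x - y = a$ and of $x - y = ca$, whence $\delta(ca) = \delta(a)$. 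Consequently $S_D$ is a union of some of the four cosets $C_0^{(4)}, \dots, C_3^{(4)}$, and since $|S_D| = (p-1)/2 = 2 \cdot (p-1)/4$ while each coset has $(p-1)/4$ elements, $S_D$ is the union of exactly two of these cosets.

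The second step pairs up opposite cosets using negation. For any subset one trivially has $\delta(a) = \delta(-a)$, via the bijection $(x,y) \mapsto (y,x)$ between solutions of $x-y=a$ and of $x-y=-a$. The key arithmetic input is that, because $p \equiv 5 \pmod 8$, writing $p = 8t+5$ gives $-1 = g^{(p-1)/2} = g^{4t+2} \in C_2^{(4)}$. Hence for $a \in C_j^{(4)}$ we have $-a \in C_{j+2}^{(4)}$ (indices taken modulo $4$), and combining this with $\delta(a) = \delta(-a)$ yields $\delta_j = \delta_{j+2}$, where $\delta_j$ denotes the common value of $\delta$ on $C_j^{(4)}$. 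In particular $\delta_0 = \delta_2$ and $\delta_1 = \delta_3$.

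Putting the two steps together finishes the proof. Since $S_D$ is a union of exactly two cosets and is both nonempty and proper, the pair $\{\delta_0, \delta_1\}$ must be precisely $\{(p-5)/4,(p-1)/4\}$; by Step~2 the cosets on which $\delta$ attains the smaller value $(p-5)/4$ are either $\{C_0^{(4)}, C_2^{(4)}\}$ or $\{C_1^{(4)}, C_3^{(4)}\}$. As $C_0^{(4)} \cup C_2^{(4)} = C_0^{(2)}$ and $C_1^{(4)} \cup C_3^{(4)} = C_1^{(2)}$, we conclude $S_D = C_0^{(2)}$ or $S_D = C_1^{(2)}$, as required.

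I expect no serious computational obstacle here; the argument is structural rather than calculational. The one point deserving care is the claim that $S_D$ is a union of \emph{exactly} two fourth-power cosets, which relies on two inputs simultaneously: the constancy of $\delta$ on cosets (Step~1) and the genuine almost-difference-set property $|S_D| = (p-1)/2$ from Theorem~\ref{thm:DHL}. The conceptual heart of the lemma is the congruence $p \equiv 5 \pmod 8$, which forces $-1 \in C_2^{(4)}$ rather than $C_0^{(4)}$; it is exactly this placement that makes negation pair \emph{opposite} cosets, thereby collapsing $S_D$ onto a coset of the index-$2$ subgroup.
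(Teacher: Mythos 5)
Your proof is correct and follows essentially the same route as the paper's: both exploit the invariance of the difference function under multiplication by $C_0^{(4)}$, the symmetry $|M_a|=|M_{-a}|$ together with $-1\in C_2^{(4)}$ (equivalently, $\alpha D=-D$ for $\alpha\in C_2^{(4)}$) to get constancy on the cosets of $C_0^{(2)}=C_0^{(4)}\cup C_2^{(4)}$, and then the count $|S_D|=(p-1)/2$ from Theorem~\ref{thm:DHL} to force $S_D$ to be exactly one of $C_0^{(2)}$, $C_1^{(2)}$. Your write-up is merely a more explicit, coset-by-coset version of the paper's argument, with no substantive difference.
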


\begin{proof}
For each $a \in \mathbb{F}_p^*$, let $M_a=\{(x, y) \in D \times D \mid x-y=a\}$.
By the definition of $C^{(4)}_i$, it holds for each $\alpha \in C_0^{(4)}$ that $\alpha D=D$, which implies that $|M_b|=|M_c|$ for every pair of distinct $b, c \in C^{(4)}_0$.
It also holds for each $\alpha \in C_2^{(4)}$ that $\alpha D=-D$.
Since $|M_a|=|M_{-a}|$ for each $a \in \mathbb{F}_p^*$, it is shown that $|M_b|=|M_c|$ for every pair of distinct $b, c \in C^{(4)}_0 \cup C^{(4)}_2$.
The lemma is obtained by the above observation, together with Theorem~\ref{thm:DHL} and simple facts that $C^{(4)}_0 \cup C^{(4)}_2=C^{(2)}_0$ and $|C^{(2)}_0|=(p-1)/2$.
\end{proof}
\begin{remark}
Since $1 \in C_0^{(4)}$, $S_D=C_0^{(2)}$ holds if and only if $1 \in S_D$, which follows from the discussion in the proof of Lemma~\ref{lem:techmain}.
\end{remark}

\begin{example}
Suppose that $D$ is a $(13, 6, 2)$-almost difference set in Example~\ref{eg:1}. 
One can easily check that $S_{D}=\{2, 5, 6, 7, 8, 11\}=C_1^{(2)}$.
When $D$ is a $(29, 14, 6)$-almost difference set in Example~\ref{eg:1}, it also can be checked that $S_{D}=\{2, 3, 8, 10, 11, 12, 14, 15, 17, 18, 19, 21, 26, 27\}=C_1^{(2)}$.
\end{example}

Now we prove Theorem~\ref{theorem-main2}.

\begin{proof}[Theorem~\ref{theorem-main2}]
For the simplicity we present the proof for the case that $q$ is a prime $p \equiv 5 \pmod{8}$ that $p=s^2+4$ for some odd integer $s$.
Note that the same discussion holds for prime powers $q$ with the same conditions as well (see Remarks~\ref{rem:primepower} and \ref{rem-125}).

Let $D=C_0^{(4)} \cup C_1^{(4)}$ which is a $(p, (p-1)/2, (p-5)/4)$-almost difference set of the additive group $\mathbb{F}_p^+$ by Theorem~\ref{thm:DHL}.
Recall that $CT_p$ with $\ell=2$ is $Cay(\mathbb{F}_p^+, D)$ and the multiset of eigenvalues of (the adjacency matrix of) $Cay(\mathbb{F}_p^+, D)$ is 
$
\{\lambda_a=\sum_{d \in D}\psi_a(d) \mid a \in \mathbb{F}_p\},
$
where 
$\psi_a(x)=\exp(\frac{2\pi i}{p}(ax))$.
Note that $\lambda_0=(p-1)/2$, which is the Perron-eigenvalue.

From now on, assume that $a \neq 0$.
Since $C^{(2)}_1=gC^{(2)}_0$, it follows from Lemmas~\ref{lem:cha} and \ref{lem:techmain} that
\begin{equation}
\label{eq:eigen1}
    |\lambda_a|^2
    =\begin{cases}
    \frac{p-1}{4}-\sum_{s\in C_0^{(2)}}\psi_a(s) & S_D=C^{(2)}_0;\\
    \\
    \frac{p-1}{4}-\sum_{s\in C_0^{(2)}}\psi_{ag}(s) & S_D=C^{(2)}_1.
    \end{cases}
\end{equation}
Below the proof for the case that $S_D=C^{(2)}_0$ will be noted since by (\ref{eq:eigen1}), the same discussion works for the another case.

First a simple calculation shows that
\begin{equation}
\label{eq:eigen2}
    \sum_{s\in C_0^{(2)}}\psi_a(s)=\frac{1}{2}\Biggl(\sum_{s \in \mathbb{F}_p}\psi_a(s^2)-1\Biggr),
\end{equation}
where $\sum_{s \in \mathbb{F}_p}\psi_a(s^2)$ is a quadratic Gauss sum over $\mathbb{F}_p$.
Considering the condition that $p \equiv 5\pmod{8}$, the following equation is obtained by Theorems 1.1.3 and 1.2.4 in \cite{BEW98}$\colon$
\begin{align}
\label{eq:Gauss2}
    \sum_{s \in \mathbb{F}_p}\psi_a(s^2)
    =\Bigl(\frac{a}{p}\Bigr)\sqrt{p}
    =\begin{cases}
    \sqrt{p} & \text{$a \in C^{(2)}_0$};\\
    -\sqrt{p} & \text{$a \in C^{(2)}_1$},
  \end{cases}
\end{align}
where $(\frac{\cdot}{p})$ denotes the Legendre symbol.
By (\ref{eq:eigen1}), (\ref{eq:eigen2}) and (\ref{eq:Gauss2}), it holds that 
\begin{align}
\label{eq:eigen3}
     |\lambda_a|
     =\begin{cases}
    \frac{\sqrt{p}-1}{2} & \text{$a \in C^{(2)}_0$};\\
     \\
    \frac{\sqrt{p}+1}{2} & \text{$a \in C^{(2)}_1$}.
  \end{cases}
\end{align}

Since $Cay(\mathbb{F}_p^+, D)$ is a regular tournament, its adjacency matrix $A$ is normal and $A+A^T=J_p-I_p$, where $J_p$ and $I_p$ are the all-one and identity matrix of order $p$, respectively.
Thus the Perron-Frobenius theorem (e.g. \cite[Theorem 31.11]{LW01}) implies that for each $a \in\mathbb{F}_p^*$,
\begin{equation}
\label{eq:real1}
    \lambda_a+\overline{\lambda_a}=-1,
\end{equation}
and so,
\begin{equation*}
    {\rm Re}(\lambda_a)={\rm Re}(\overline{\lambda_a})=-\frac{1}{2}.
\end{equation*}
Since $\lambda_{-a}=\overline{\lambda_a}$ is also an eigenvalue and $-1\in C^{(2)}_0$ (because $p \equiv 1 \pmod{4}$), it holds by (\ref{eq:eigen3}) that
\begin{equation}
\label{eq:real2}
    |\lambda_a|^2={\rm Re}(\lambda_a)^2+{\rm Im}(\lambda_a)^2
     =\begin{cases}
    \frac{p-2\sqrt{p}+1}{4} & \text{$a \in C^{(2)}_0$};\\
     \\
    \frac{p+2\sqrt{p}+1}{4} & \text{$a \in C^{(2)}_1$}.
    \end{cases}
\end{equation}
By (\ref{eq:real1}) and (\ref{eq:real2}), 
\begin{equation}
    {\rm Im}(\lambda_a)^2
    =\begin{cases}
    \frac{p-2\sqrt{p}}{4} & \text{$a \in C^{(2)}_0$};\\
     \\
    \frac{p+2\sqrt{p}}{4} & \text{$a \in C^{(2)}_1$}.
    \end{cases}
\end{equation}

Now let $A_{>0}=\{a \in \mathbb{F}_p^* \mid {\rm Im}(\lambda_a)>0\}$ and $A_{<0}=\{a \in \mathbb{F}_p^* \mid {\rm Im}(\lambda_a)<0\}$.
Notice that $A_{<0}=-A_{>0}$.
Since $(A_{>0}, A_{<0})$ is a partition of $\mathbb{F}_p^*$, it holds that
\begin{equation}
\label{eq:eigenfinal}
    \lambda_a
    =\begin{cases}
    \frac{-1+i\sqrt{p-2\sqrt{p}}}{2} & \text{$a \in A_{>0} \cap C^{(2)}_0$};\\
     \\
    \frac{-1-i\sqrt{p-2\sqrt{p}}}{2} & \text{$a \in A_{<0} \cap C^{(2)}_0$};\\
    \\
    \frac{-1+i\sqrt{p+2\sqrt{p}}}{2} & \text{$a \in A_{>0} \cap C^{(2)}_1$};\\
     \\
    \frac{-1-i\sqrt{p+2\sqrt{p}}}{2} & \text{$a \in A_{<0} \cap C^{(2)}_1$}.
    \end{cases}
\end{equation}

Since $|C^{(2)}_0|=|C^{(2)}_1|=|A_{>0}|=|A_{<0}|=(p-1)/2$ and $\sum_{a \in \mathbb{F}_p}\lambda_a={\rm Trace}(A)=0$ (since $Cay(\mathbb{F}_p^+, D)$ has no loops) where $\lambda_0=(p-1)/2$, 
a simple computation shows that each of the four eigenvalues in (\ref{eq:eigenfinal}) have multiplicity $(p-1)/4$. 
\end{proof}

\begin{remark}
\label{rem:primepower}
We remark that $5^3=125$ is the unique prime power that is not a prime and satisfies the condition of Theorem~\ref{thm:DHL} (and hence Theorems~\ref{theorem-main1} and \ref{theorem-main2}). 
Indeed if $q \equiv 5 \pmod{8}$ is a power of an odd prime $p$, then $\log_p q$ must be odd. 
Except for $5^3=125$, there does not exist such prime powers with $\log_p q>1$.
This fact directly follows from a result by Cohn~\cite{C93} that if $r \geq 3$, the diophantine equation $X^2+4=Y^r$ has integer solutions if and only if $r=3$, where integer solutions $(X, Y)$ are only $(\pm 2, 2), (\pm 11, 5)$. 
 \end{remark}

\begin{remark}
\label{rem-125}
The proof of Theorem~\ref{theorem-main1} can be used to show that $CT_{125}$ has the canonical spectrum.
Here a formula corresponding to (\ref{eq:Gauss2}) can be obtained by Theorems 1.1.3, 1.2.4 and 11.5.4 in \cite{BEW98}.
\end{remark}

\section{Concluding remarks}
\label{sect-rem}

\begin{enumerate}
    \item One can determine the full automorphism group ${\rm Aut}(CT_p)$ for primes $p \equiv 5 \pmod{8}$. Indeed, we have ${\rm Aut}(CT_p)=\{\sigma_{a, b} \mid a \in C_0^{(4)}, b \in \mathbb{F}_p \}$ where for $a \in \mathbb{F}_p^*$, $b \in \mathbb{F}_p$, the permutation $\sigma_{a, b}$ is defined as $x \mapsto ax+b$. The proof is similar to the discussion in \cite{G70}. It immediately follows from this result that $CT_p$ has exactly two arc orbits $\{(x,y) \mid x-y \in C^{(4)}_{0}\}$ and $\{(x,y) \mid x-y \in C^{(4)}_{1}\}$, which provides a new direct proof of (i) of Theorem 3.1 in \cite{AD92} (for prime cases).

    \item For a regular tournament $T$ with $n$ vertices and the second largest eigenvalue $\lambda(T)$ (in absolute value), we have (e.g. Remark~2 in \cite{S2021})
    $$
\lambda(T)\geq \frac{\sqrt{n+1}}{2}.
    $$
    Hence Theorem~\ref{theorem-main2} shows that $CT_q$ that is nearly-doubly-regular asymptotically minimizes the second largest eigenvalue with respect to the above bound. 
    By spectral characterizations of quasi-randomness, we see that a nearly-doubly-regular tournament $CT_q$ possesses (strong) quasi-randomness (see e.g. \cite{KS13} and \cite{S2021}). 

    \item Tabib~\cite{T80} showed that $Cay(\mathbb{Z}/{9}\mathbb{Z}, \{1, 2, 3, 5\})$ is the unique rotational $NDR_9$. It can be easily checked that $\{1, 2, 3, 5\}\subset \mathbb{Z}/{9}\mathbb{Z}$ is an $(9, 4, 1)$-almost difference set of the cyclic group $\mathbb{Z}/{9}\mathbb{Z}$ and hence $Cay(\mathbb{Z}/{9}\mathbb{Z}, \{1, 2, 3, 5\})$ is an $NDR_9$ by Lemma~\ref{lem-ND2}. We note that this is substantially obtained by a construction of complementary difference sets due to Szekeres~\cite{S69}. To see this, consider the finite field $\mathbb{F}_{19}$ and its multiplicative subgroup $C_0^{(2)}=\{1, 4, 5, 6, 7, 9, 11, 16, 17\} \cong \mathbb{Z}/9\mathbb{Z}$. Then by taking $2 \in \mathbb{F}_{19}^*$ as a primitive element, the subset $C_0^{(2)} \cap (C_0^{(2)}+1)=\{5, 6, 7, 17\} \subset \mathbb{F}_{19}$ yields a $(9, 4, 1)$-almost difference set of $\mathbb{Z}/9\mathbb{Z}$, namely $\{3, 5, 7, 8\} \subset \mathbb{Z}/9\mathbb{Z}$. One can check that $\{3, 5, 7, 8\}$ is mapped to $\{1, 2, 3, 5\}$ by the permutation $x \mapsto 4x \pmod{9}$ which is an automorphism of $\mathbb{Z}/9\mathbb{Z}$. 
    Hence one can easily check that $Cay(\mathbb{Z}/{9}\mathbb{Z}, \{1, 2, 3, 5\})$ and $Cay(\mathbb{Z}/{9}\mathbb{Z}, \{3, 5, 7, 8\})$ are isomorphic.
\end{enumerate}

\section*{Acknowledgement}
We greatly appreciate S. V. Savchenko for sending us a paper~\cite{S16} and for his valuable comments and discussions.
We also deeply thank Koji Momihara for his many helpful comments.
This work was supported by JSPS KAKENHI Grant Numbers JP20J00469, JP23K13007.

\end{document}